\documentclass{amsart} 
\usepackage{latexsym,amssymb,graphicx,amscd}

\newtheorem{thm}{Theorem}

\newtheorem{prop}[thm]{Proposition}

\newcommand{\BZ}{{\mathbb{Z}}}

\newcommand{\Si}{{\Sigma}}

\begin{document}
\title{Remarks on Congruence of $3$--manifolds}.

\author{ Patrick M. Gilmer}
\address{Department of Mathematics\\
Louisiana State University\\
Baton Rouge, LA 70803\\
USA}
\email{gilmer@math.lsu.edu}
\thanks{partially supported by NSF-DMS-0604580}
\urladdr{www.math.lsu.edu/\textasciitilde gilmer/}

\dedicatory{Dedicated to Lou Kauffman on the occasion of his 60th birthday}

\date{November 15, 2007}

\begin{abstract}  We give two proofs that the $3$--torus is not weakly $d$--congruent to $\#^3 S^1 \times S^2$, if $d>2$. We study how cohomology ring structure relates to weak congruence. We give an example of three 3--manifolds which are weakly $5$--congruent but are not $5$--congruent.  \end{abstract}

\maketitle

Let $d$ be an integer greater than one. In \cite{G}, we considered two equivalence relations generated by restricted surgeries on oriented closed $3$--manifolds. Weak type--$d$ surgery is $q/ds$ Dehn surgery along a simple closed curve. Here $s$ and $q$ (which must be relatively prime to $d$ and $s$) may vary but $d$ is held fixed. The label $q/ds$ indicating which surgery is given with respect to some  meridional and a longitudal pair  on the boundary of a solid torus neighborhood of the surgery curve. A meridian bounds a disk in the solid torus which meets the surgery curve transversely in one point, and a longitude meets the meridian transversely in one point in the boundary torus. The set of surgeries described as weak type--$d$ surgeries does not depend on the choice of meridional and a longitudinal pair. If $q= \pm 1 \pmod{d}$, we say the surgery is type--$d$ surgery. This concept is also independent of the choice of meridian and longitude.

The equivalence relation on the set of closed oriented $3$--manifolds generated by weak type--$d$ surgery is called weak $d$--congruence. The equivalence relation  generated by  type--$d$ surgery is called  $d$--congruence. 

The equivalence relation $d$--congruence is coaser  \cite{G} than an  equivalence relation which was first considered by Lackenby \cite{L} : congruence modulo $d$. It is not known that $d$--congruence is strickly coaser than congruence modulo $d$, but this seems likely. 
The notion of $d$-congruence of 3-manifolds is closely related to the notion of $t_d$-move (now called $d$-move) equivalence of links  \cite[remark before proof of Theorem p.639]{Pr1}: A $d$-move between links implies that there is
$1/d$ Dehn surgery relating the double branched covers of $S^3$ along  the links.  Similarly, weak $d$-equivalence of 3-manifolds is closely related to rational move equivalence of links as analyzed in 
\cite[footnote 5]{P2}, \cite[footnote 22]{P3} and \cite{DP,DIP}.
Completing this circle of ideas, we note that $d$-move equivalence of links is a special case of congruence modulo $(d,q)$ of links due to
Fox \cite{F1}. Lackenby's study of congruence modulo $(d,q)$ of links lead him to define congruence modulo $d$ of 3-manifolds.

We will give two proofs of the following theorem. The first proof will use Burnside groups and second will  use cohomology ring structure. We let $T^3$ denotes the $3$--torus.

\begin{thm}\label{1} $T^3$ is not weakly $d$--congruent to $\#^3 S^1 \times S^2$ for any $d>2.$ \end{thm}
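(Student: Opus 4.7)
My plan is to use the Burnside quotient of the fundamental group as a weak $d$-congruence invariant. For an oriented closed $3$-manifold $M$, let
\[
B_d(M) := \pi_1(M) \,\big/\, \langle\!\langle\, g^d : g \in \pi_1(M)\, \rangle\!\rangle
\]
be the largest quotient of $\pi_1(M)$ of exponent dividing $d$. Since $\pi_1(T^3) = \BZ^3$, one has $B_d(T^3) = (\BZ/d)^3$, which is abelian of order $d^3$. On the other hand, van Kampen gives $\pi_1(\#^3 S^1 \times S^2) = F_3$, so $B_d(\#^3 S^1 \times S^2) = B(3,d)$, the free Burnside group of exponent $d$ on three generators. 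For $d > 2$ there exists a nonabelian group of exponent dividing $d$ on two generators --- for instance, the Heisenberg group over $\BF_p$ for any odd prime $p \mid d$, and the dihedral group $D_d$ when $d$ is even. Any such group is a quotient of $B(2,d)$, and $B(2,d)$ is a quotient of $B(3,d)$ (kill one generator), so $B(3,d)$ is nonabelian and cannot be isomorphic to $(\BZ/d)^3$.

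It remains to show $B_d$ is preserved under a single weak type-$d$ surgery. Suppose $M'$ is obtained from $M$ by $q/(ds)$ surgery on $K$ with $\gcd(q,d) = 1$, and let $X = M \setminus \Int N(K)$, with meridian-longitude pair $\mu,\lambda \in \pi_1(\partial N(K))$. The Dehn-filling form of van Kampen yields
\[
\pi_1(M) = \pi_1(X)/\langle\!\langle \mu \rangle\!\rangle, \qquad \pi_1(M') = \pi_1(X)/\langle\!\langle \mu^q \lambda^{ds} \rangle\!\rangle,
\]
and applying $B_d$ reduces the question to showing that in $B_d(X)$ the normal closures of $\mu$ and of $\mu^q\lambda^{ds}$ coincide. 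This is where the hypothesis genuinely enters: in $B_d(X)$ the element $\lambda^{ds} = (\lambda^s)^d$ is trivial, so the second relation becomes $\mu^q$; and since $\gcd(q,d) = 1$ and the order of $\mu$ divides $d$, the subgroups $\langle \mu \rangle$ and $\langle \mu^q \rangle$ coincide, so their normal closures agree. Hence $B_d(M) \cong B_d(M')$.

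The main technical obstacle is precisely this surgery-invariance computation: one must see that the Burnside relation $g^d = 1$ simultaneously kills the $\lambda^{ds}$ term in the denominator and absorbs the numerator ambiguity in $\mu^q$. An alternative approach would track the triple cup product $H^1(M;\BZ/d)^{\otimes 3} \to \BZ/d$, which is a nondegenerate alternating form for $T^3$ and vanishes identically on $\#^3 S^1 \times S^2$. Establishing the analogous invariance statement here would involve a $4$-dimensional cobordism associated to the surgery in which the boundary inclusions induce isomorphisms on $H^1(-;\BZ/d)$, so that the cup-product form on each end is pulled back from the same form on the cobordism; this should work because, mod $d$, a $q/(ds)$-surgery cobordism looks homologically like a trivial filling.
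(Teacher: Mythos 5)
Your argument is correct and is essentially the paper's first proof: it distinguishes $T^3$ from $\#^3 S^1 \times S^2$ by the $d$th Burnside group of the fundamental group, which is $(\BZ/d)^3$ for the torus and the nonabelian free Burnside group $B(3,d)$ for the connected sum. The only real difference is that you supply direct proofs of the two ingredients the paper cites --- invariance of the Burnside group under a weak type-$d$ surgery (attributed there to \cite{G} and \cite{DP}) and nonabelianness of $B(3,d)$ for $d>2$ (cited from \cite{MKS}) --- and both of your verifications are sound.
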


We remark that we don't know whether or not  the $3$--torus is weakly  $2$--congruent to  $\#^3 S^1 \times S^2$. It seems unlikely. If one could prove that the $3$--torus is not weakly  $2$--congruent to  $\#^3 S^1 \times S^2$, it would provide a second proof of Fox's result \cite{F2} that the $3$--torus is not the double branched
cover of a link. By the trick of Montesinos \cite{M}, the double  branched cover of $S^3$ along a link with $c$ components is weakly $2$--congruent to $\#^{c-1} S^1 \times S^2$.

\begin{proof}[First Proof of Theorem \ref{1}]
The $d$th Burnside group of a group $G$ obtained by quotienting $G$ by the subgroup normally generated by  the $d$th powers of all elements.  The $d$th Burnside group of a manifold $M$  is  the $d$th Burnside group of  the fundamental group of the manifold.  Slightly generalizing an observation of Dabkowski and Przytycki
 \cite[proof of Theorem (1.2)]{DP}, we noted in \cite{G} that the $d$th Burnside group is preserved by weak $d$--congruence.

The $d$th  Burnside group of $T^3$ is abelian. In fact it is 
$\BZ_d^3$.  According to  \cite[Exercise 2.2.19]{MKS}, the $d$th Burnside group of a free group on $r$ generators is nonabelian if $d>2$ and $r>1$. Of course the fundamental group of $\#^3 S^1 \times S^2$  is free on  three generators. \end{proof}

In  \cite[Theorem (2.7)]{G}, we observed that a weak $d$--congruence induces an isomorphism of 
$\BZ_d$--cohomology groups of $M$. Moreover if $d$ is odd, this isomorphism preserves the ring structure. Using Poincare duality, this simply means the  trilinear pairing $t_M$  on $H_1(M,\BZ_d)$ with values in $\BZ_d$ is preserved.

Let $t_M$ denote the trilinear  form  on $H_1(M,\BZ_d)$ with values in $\BZ_{d}$ which  sends $(\chi_1, \chi_2, \chi_3)$ to $(\chi_1\cup \chi_2 \cup  \chi_3) \cap [M]$. If $d$ is even, let $\rho:\BZ_d \rightarrow \BZ_{d/2}$ be reduction modulo ${d/2}$.

\begin{thm}\label{ring} A weak $d$--congruence between $M$ and $M'$ induces an isomorphism $c:H^1(M,\BZ_d) \rightarrow H^1(M',\BZ_d)$.
If $d$ is odd,
 \[  
 t_M(\chi_1, \chi_2, \chi_3)
=  t_{M'}(c(\chi_1), c(\chi_2), c(\chi_3) ).\] 
If 
$d$ is even,  \[\rho  
\left( t_M(\chi_1, \chi_2, \chi_3)\right)
= \rho \left( t_{M'}(c(\chi_1), c(\chi_2), c(\chi_3) )\right).\]
\end{thm}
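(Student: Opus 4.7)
I would first reduce to the case of a single weak type-$d$ surgery $M \to M'$ along a simple closed curve $K$ with framing $q/ds$. Both $M$ and $M'$ contain the common exterior $M_0 = M \setminus \nu(K) = M' \setminus \nu(K')$. The Mayer--Vietoris sequences for $M = M_0 \cup V$ and $M' = M_0 \cup V'$ yield surjections $H_1(M_0, \BZ_d) \twoheadrightarrow H_1(M, \BZ_d)$ with kernel $\langle[\mu]\rangle$, and $H_1(M_0, \BZ_d) \twoheadrightarrow H_1(M', \BZ_d)$ with kernel $\langle q[\mu] + ds[\lambda]\rangle$. Since $ds \equiv 0 \pmod d$ and $q$ is a unit in $\BZ_d$, both kernels equal $\langle[\mu]\rangle$, giving a canonical identification and, by dualizing, the isomorphism $c\colon H^1(M, \BZ_d) \to H^1(M', \BZ_d)$.

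\textbf{Cup product via a $4$-dimensional cobordism.} I would then pick a cobordism $W$ with $\partial W = M \sqcup (-M')$ realizing the surgery and attempt to lift each $\chi_i \in H^1(M, \BZ_d)$ to a class $\tilde\chi_i \in H^1(W, \BZ_d)$ restricting to $\chi_i$ on $M$ and to $c(\chi_i)$ on $M'$. Assuming such lifts exist, naturality of the cup product together with the relation $i_*[M] = i'_*[M']$ in $H_3(W, \BZ_d)$ (which follows from $\partial[W, \partial W] = [M] - [M']$) gives
\[
t_M(\chi_1, \chi_2, \chi_3) = \langle \tilde\chi_1 \cup \tilde\chi_2 \cup \tilde\chi_3, i_*[M]\rangle = \langle \tilde\chi_1 \cup \tilde\chi_2 \cup \tilde\chi_3, i'_*[M']\rangle = t_{M'}(c(\chi_1), c(\chi_2), c(\chi_3)).
\]

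\textbf{Main obstacle and the even case.} The crux is constructing $W$ so that every $\chi_i$ lifts. For the naive cobordism built from the continued-fraction expansion of $q/ds$ as a chain of $2$-handles, the obstruction to lifting lies in $H^2(W, M; \BZ_d)$ and equals $\chi_i([K])$, which is generally nonzero. One approach is to enlarge $W$ by attaching additional handles so that $H^2(W, M; \BZ_d) = 0$ while keeping $\partial W$ unchanged. An alternative is to compute directly at the cochain level, localizing the difference $t_M - t_{M'}$ in the solid tori $V$ and $V'$ via Mayer--Vietoris and exploiting the vanishing of $\lambda^* \cup \lambda^*$ in $H^2(V, \BZ_d) = 0$ to kill the local contributions. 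For $d$ odd this cleanly yields equality; for $d$ even, graded commutativity of cup products only forces $2(\chi \cup \chi) = 0$, so $\chi \cup \chi$ may take values in the $2$-torsion subgroup $(d/2)\BZ_d \subset \BZ_d$, and the residual $d/2$-valued correction this introduces in the local computation is precisely what forces the conclusion only modulo $d/2$.
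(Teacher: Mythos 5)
Your overall architecture is sound and, at the level of ideas, tracks what the paper does (the paper simply cites \cite[Theorem (2.7)]{G} for the odd case and adds one observation for the even case): identify $H^1(M,\BZ_d)$ with $H^1(M',\BZ_d)$ through the common exterior, localize the discrepancy between the two trilinear forms at the surgery solid torus, and observe that the resulting correction term is killed by $2$, hence vanishes for $d$ odd and lies in $\{0,d/2\}$ for $d$ even. Your Mayer--Vietoris identification of the two kernels with $\langle[\mu]\rangle$ is correct, and your closing remark that graded commutativity forces $2(\chi\cup\chi)=0$ is exactly the Poincar\'e--dual form of the paper's argument that the triple intersection number of surfaces is skew symmetric, whence $\tau=-\tau\pmod d$ and $\rho(\tau)=0$.

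The gap is in the middle step, which is where the content lives and which you never carry out. You correctly diagnose that the cobordism-with-lifts argument fails (the obstruction $\chi_i([K])$ in $H^2(W,M;\BZ_d)$ is generically nonzero), but neither proposed repair is executed. Enlarging $W$ so that $H^2(W,M;\BZ_d)=0$ while fixing $\partial W$ is not obviously possible: by exactness of $H^1(W)\to H^1(M)\to H^2(W,M)$ this would force every class to extend, and no reason is given that such a cobordism exists. The cochain-level alternative is the right track, but as stated it proves too much: $H^2(V,\BZ_d)=0$ for a solid torus regardless of the parity of $d$, so if that vanishing alone killed the local contributions you would get equality on the nose for every $d$, contradicting the Proposition that immediately follows the theorem (where the correction is realized as $d/2\pmod d$ by lens spaces). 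What is actually needed, and what your sketch never establishes, is that the discrepancy $t_M(\chi_1,\chi_2,\chi_3)-t_{M'}(c\chi_1,c\chi_2,c\chi_3)$ is a multiple (with coefficient determined by the evaluations of the $\chi_i$ on the surgery curve) of a \emph{single} triple self-intersection number $\tau$ depending only on the surgery --- the triple intersection of three parallel pushoffs of one surface. Only once the discrepancy is concentrated in such a $\tau$ does skew symmetry give $2\tau=0$ and finish both cases; your final sentence asserts this localization rather than proving it.
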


\begin{proof} The case $d$ odd is \cite[Theorem (2.7)]{G}. The proof, in the case $d$ even, proceeds in exactly the same way.  At the end, we need to see that the triple intersection number $\tau$  
must satisfy $\rho (\tau)= 0\pmod{d/2}.$  This follows from $\tau= -\tau\pmod{d}$,  which holds since the triple intersection number of surfaces is skew symmetric.    \end{proof}

\begin{prop} In the case, $d$ is even, \[  
 t_M(\chi_1, \chi_2, \chi_3)
=  t_{M'}(c(\chi_1), c(\chi_2), c(\chi_3) ) \] need not hold. The $\tau$ that appears in the proof of  Theorem \ref{ring} is congruent to $d/2$ modulo $d.$ \end{prop}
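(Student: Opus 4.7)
The plan is to split the proposition into two parts: first, an automatic restriction forcing $\tau \in \{0,\, d/2\}$ modulo $d$; second, the construction of a weak $d$-congruence with $d$ even in which $\tau = d/2 \not\equiv 0 \pmod d$ is actually attained.

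The first part is immediate from the proof of Theorem \ref{ring}. That proof shows $\tau \equiv -\tau \pmod d$, hence $2\tau \equiv 0 \pmod d$. For $d$ even this confines $\tau$ modulo $d$ to $\{0,\, d/2\}$, and no other value can occur. The remaining content of the proposition is therefore that the value $d/2$ is realized by some concrete example.

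For the example, I would revisit the argument of \cite[Theorem 2.7]{G} on which Theorem \ref{ring} is modeled and extract an explicit formula for $\tau$ in terms of the surgery data: the slope $q/ds$, the surgery curve $K \subset M$, and the intersection data of dual surfaces $\Si_1, \Si_2, \Si_3$ with $\partial N(K)$ and with a Seifert-type surface for $K$. I then expect to perform a weak type-$d$ surgery on a non-nullhomologous curve in a manifold of sufficient cohomological richness, such as $T^3$ or a connect sum with $S^1 \times S^2$ factors. The smallest relevant case requires $q$ a unit modulo $d$ with $q \not\equiv \pm 1 \pmod d$, forcing $d \geq 8$ (e.g.\ $d = 8$, $q = 3$, $s = 1$). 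On a carefully chosen triple of dual classes, both $t_M$ and $t_{M'} \circ (c \otimes c \otimes c)$ can be computed directly from the geometric description of $c$, and the discrepancy is verified to be $d/2$ modulo $d$.

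The main obstacle is the bookkeeping in the second step. The identification $c \colon H^1(M;\BZ_d) \to H^1(M';\BZ_d)$ depends on how representative surfaces are extended across the surgery region via the meridian disk of the new solid torus, and one must rule out that some reparametrization on the $H^1$-side secretly absorbs the $d/2$ discrepancy. Keeping accurate track of the contribution of triple self-intersections on a Seifert surface for $K$, and of how these interact with the condition $q \not\equiv \pm 1 \pmod d$, is where the core of the calculation lies.
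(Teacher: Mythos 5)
There is a genuine gap here: the actual content of the Proposition is the existence of an example in which $\tau\equiv d/2\not\equiv 0\pmod d$, and that is precisely the part you leave as a plan rather than a proof. Your first step (skew-symmetry gives $2\tau\equiv 0$, hence $\tau\in\{0,d/2\}$ mod $d$) is correct but is already the content of the proof of Theorem \ref{ring}; it does not show that the displayed equality can fail. Your second step is never executed --- no surgery, no classes, no computation of $\tau$ is actually produced --- so as written the argument does not establish the statement.

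Moreover, the plan itself contains a misconception that would send you down a much harder road than necessary. You assert that the smallest relevant case requires $q\not\equiv\pm 1\pmod d$ and hence $d\geq 8$. This conflates ``weakly $d$--congruent but not $d$--congruent'' with ``the trilinear form is not preserved.'' The failure of exact preservation for even $d$ has nothing to do with the residue of $q$: it occurs already for type--$d$ surgeries, for every even $d\geq 2$. The paper's example is a one-line computation along these lines: one passes from $S^1\times S^2$ to the lens space $L(ds,q)$ by a weak type--$d$ surgery; for $\psi$ a generator of $H^1(L(ds,q),\BZ_d)$ one has $\psi\cup\psi$ equal to $d/2$ times a generator (Hatcher, Example 3.41), so $t_{L(ds,q)}(\psi,\psi,\psi)=d/2\pmod d$, while $t_{S^1\times S^2}$ is identically zero; hence $\tau\equiv d/2\pmod d$. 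No bookkeeping with Seifert surfaces, triple self-intersections, or the extension of $c$ across the surgery torus is needed --- the known cup product structure of lens spaces does all the work. I would replace your second step with this (or an equally explicit) computation; without it the Proposition is not proved.
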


\begin{proof} 
One may  pass from $S^1 \times S^2$ to $L(ds,q)$ by a weak type-d surgery.
Let $\psi$ denote a generator for $H^1(L(ds,q), \BZ_d).$  One has that  $\psi \cup \psi$ is $d/2$ times a generator for  $H^1(L(ds,q), \BZ_d)$
\cite[Example 3.41]{H}. It follows that $t_{L(ds,q)}(\psi,\psi,\psi)= d/2 \pmod{d}.$
On the other hand, $t_{S^1 \times S^2}$ is the zero trilinear form. We note that it follows that $\tau= d/2 \pmod{d}$. \end{proof}

\begin{proof}[Second Proof of Theorem \ref{1}] We   apply Theorem \ref{ring}. If $d$ is odd, we note that $ t_{T^3}$ is non--trivial and  $ t_{\#^3 S^1 \times S^2}$ is zero. If $d$ is even, we observe that  $\rho  \circ t_{T^3}$ is non--trivial and  $\rho \circ t_{\#^3 S^1 \times S^2}$ is zero. 
 \end{proof}

Let $P$ denote the Poincare homology sphere. $P$ can also be described as 
the Brieskorn manifold $\Si(2,3,5)$.  Let $\Si$ denote the  Brieskorn homology sphere $\Si(2,3,7)$. 

\begin{prop}  $P$, $\Si$ and $S^3$ are all  weakly $5$--congruent to each other. However  no two of them are $5$-congruent.\end{prop}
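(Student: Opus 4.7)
My plan is to prove the two claims of the proposition separately: first the mutual weak $5$-congruence of $S^3$, $P$, and $\Sigma$, and then the failure of $5$-congruence between any two of them.

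For the weak $5$-congruence, I would convert standard integer-framed surgery descriptions of $P$ and $\Sigma$---for instance the $E_8$-plumbing presentation of $P$ and $-1$-surgery on the right-handed trefoil giving $\Sigma$---into surgery descriptions whose every framing has the form $q/(5s)$. The key local move is a Rolfsen twist on an unknotted $p/q$-framed component $K$: twisting $k$ times replaces $K$'s framing by $p/(q + kp)$ while predictably altering the framings of components threading the spanning disk. Choosing $k$ so that $5 \mid (q + kp)$ yields a weak type-$5$ framing on $K$; the collateral framing changes on neighboring components can be absorbed either by iterating the twist procedure or by adjoining auxiliary $\pm 1/(5s)$-framed meridional unknots (each individually a weak type-$5$ surgery that, in isolation, returns $S^3$). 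Carried out carefully, this should produce weak type-$5$ presentations of $P$ and $\Sigma$ on $S^3$, giving the desired weak $5$-congruences.

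For the non-$5$-congruence statement, I would use the Casson invariant $\lambda$ modulo $5$ as the primary obstruction. Walker's surgery formula $\lambda(M_{1/n}(K)) - \lambda(M) = \tfrac{n}{2}\Delta''_K(1)$ shows that $\pm 1/(5s)$-surgery (the only type-$5$ surgery keeping one within $\BZ$-homology spheres) changes $\lambda$ by a multiple of $5$, so $\lambda \bmod 5$ is preserved along chains of type-$5$ surgeries through $\BZ$-homology spheres. Since $\lambda(S^3)=0$ while $\lambda(P)=\lambda(\Sigma)=-1$, this already separates $S^3$ from both $P$ and $\Sigma$. To distinguish $P$ from $\Sigma$---whose Casson invariants coincide modulo $5$---I would pass to a finer type-$5$ invariant, such as the second Ohtsuki finite-type invariant modulo $5$ (whose variation under $\pm 1/(5s)$-surgery is likewise divisible by $5$, since it is polynomial of degree $2$ in the surgery parameter), or a suitable reduction of the Witten-Reshetikhin-Turaev invariant $\tau_5$.

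The main obstacle is that $5$-congruence, as an equivalence relation generated by individual surgeries, a priori allows intermediate manifolds with nontrivial first homology where the Casson invariant is not defined. One either shows that any type-$5$ chain connecting two $\BZ$-homology spheres can be replaced by a chain staying within $\BZ$-homology spheres, or replaces $\lambda \bmod 5$ with an invariant defined on all closed oriented $3$-manifolds (for example via Lescop's generalization of the Casson-Walker invariant) and verifies its mod-$5$ behavior under any type-$5$ Dehn surgery. A secondary challenge is to actually distinguish $P$ from $\Sigma$ by computing the chosen refined invariant on both Brieskorn spheres and confirming its mod-$5$ invariance under type-$5$ surgery in full generality.
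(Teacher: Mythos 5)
There are genuine gaps in both halves of your plan. For the weak $5$--congruence, the local moves you propose cannot accomplish what you ask of them. A Rolfsen twist is available only on an \emph{unknotted} component, so it does not apply to the knotted, integrally framed component in the trefoil/figure-eight style descriptions you start from; and a $\pm1/(5s)$--framed meridional unknot changes the framing of the component it encircles by an integer multiple of $5s$, so it can never turn an integer framing into one with denominator divisible by $5$ (note that an integer $m=m/1$ is never of the form $q/(5s)$). Even for the $E_8$ plumbing, where every component is unknotted, you give no argument that the collateral framing changes can all be resolved; a short computation shows, for instance, that $-1$ cannot be written as a length-two continued fraction $q_1/(5s_1)-5s_2/q_2$ with both $q_i$ prime to $5$. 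Since weak $5$--congruence to $S^3$ is genuinely obstructed for general manifolds (by $H_1(\,\cdot\,;\BZ_5)$ and by $5$th Burnside groups), any proof must use something special about $P$ and $\Si$, and your sketch never identifies what that is. The paper's route is entirely different: it realizes $P$ and $\Si$ as double branched covers of $S^3$ along the $(3,5)$ and $(3,7)$ torus knots (Milnor), observes that these are closures of $3$--braids, and invokes the Dabkowski--Ishiwata--Przytycki theorem that every $3$--braid closure is $(2,2)$--move equivalent to a trivial link or to one of four specified $3$--component links; $(2,2)$--moves lift to $\pm2/5$--surgeries (weak type--$5$) on double branched covers, and the multi-component alternatives are ruled out because $H_1(\,\cdot\,;\BZ_5)$ is preserved by weak $5$--congruence and $P$, $\Si$ are homology spheres. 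That external input is the substantive content your proposal lacks.

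For the non--$5$--congruence, your primary obstruction is not shown to work: both $P$ and $\Si$ have Casson invariant of absolute value one, so $\lambda\bmod 5$ separates them only if the signs differ, which you assert away rather than verify, and you then defer to refinements (Ohtsuki's $\lambda_2$, $\tau_5$) that are neither computed on these manifolds nor shown to reduce well under a general type--$5$ surgery $q/(5s)$ with $q\equiv\pm1\pmod 5$ --- which, as you correctly note, leaves the class of homology spheres, so the whole mod--$5$ invariance argument must be rebuilt on all closed oriented $3$--manifolds. You have identified the right difficulties but not resolved them. The paper does not reprove this half at all: it cites Corollary 3.10 of the author's companion paper, where the three manifolds are distinguished by quantum invariants at the prime $5$ --- close in spirit to your $\tau_5$ suggestion, but actually carried out there, including the verification that the relevant reduction is a $5$--congruence invariant.
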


\begin{proof} The last statement is contained in  \cite[Corollary 3.10]{G}. 
By \cite[Lemma (1.1)]{Mi}, $P$ and $\Si$ are double branched covers of $S^3$ along the respectively the $(3,5)$ and $(3,7)$ torus knots.  Both of these knots are  closures of $3$--braids. According to  \cite[Theorem 2.2]{DIP}, the closure of any $3$--braid is $(2,2)$--move equivalent to a trivial link or one of four specified $3$--component links. We have that $(2,2)$ moves are covered in the double branched covers of links by $\pm 2/5$ surgeries \cite{DP, DIP}.  So $P$ and $\Si$  must each be  weakly $5$--congruent to $S^3$ (the double branched cover of the unknot) or  the double branched covers of a link with more than one component.  
But the double branched cover of a $c$--component link will  have first homology with $\BZ_5$ coefficients $\BZ_5^{c-1}$. As this homology group is preserved by
weak $5$--congruence, and both $P$ and $S$ are homology spheres, $P$ and $S$ must be weakly $5$--congruent to $S^3.$
\end{proof}

{We would like to thank Jozef Przytycki for some valuable comments.}

\end{document}